\newtheoremstyle{case}{}{}{}{}{}{:}{ }{}
\newtheorem{case}{Case}
\newtheorem{subcase}{Subcase}
\numberwithin{subcase}{case}
\newtheorem{theorem}{Theorem}[section]
\newtheorem{cor}{Corollary}[theorem]
\newtheorem{lem}[theorem]{Lemma}
\newtheorem{prop}[theorem]{Proposition}
\theoremstyle{definition}
\newtheorem{defn}[theorem]{Definitions}
\newtheorem{remark}{Remark}
\newtheorem{conjecture}[theorem]{Conjecture}
\numberwithin{equation}{section}
\definecolor{red}{rgb}{1.0, 0.0, 0.0}
\title[]{The Exact Enumeration of $4$-nomial and $5$-nomial Multiples of the Product of Primitive Polynomials over GF(2)}
\author{Soniya Takshak}
\address{Soniya Takshak \endgraf
	Department of Mathematics \endgraf
	Indian Institute of Technology Delhi \endgraf
	New Delhi, 110016, India} 
\email{sntakshak9557@gmail.com}
\author{Rajendra Kumar Sharma}
\address{Rajendra Kumar Sharma \endgraf
	Department of Mathematics \endgraf
	Indian Institute of Technology Delhi \endgraf
	New Delhi, 110016, India} 
\email{rksharmaiitd@gmail.com}
\keywords{cryptology; Primitive polynomials; Product of primitive polynomials; Stream cipher; sparse multiples.} \subjclass[2020]{11T06}
\date{}
\begin{document}
	\allowdisplaybreaks
	
	\begin{abstract} 
		
		Linear feedback shift registers (LFSRs) are used to generate secret keys in stream cipher cryptosystems. There are different kinds of key-stream generators like filter generators, combination generators, clock-controlled generators, etc. For a combination generator, the connection polynomial is the product of the connection polynomials of constituent LFSRs. For better cryptographic properties, the connection polynomials of the constituent LFSRs should be primitive with co-prime degrees. The cryptographic systems using LFSRs as their components are vulnerable to correlation attacks. The attack heavily depends on the $t$-nomial multiples of the connection polynomial for small values of $t$. In 2005, Maitra, Gupta, and Venkateswarlu provided a lower bound for the number of $t$-nomial multiples of the product of primitive polynomials over GF(2). The lower bound is exact when $t=3$. In this article, we provide the exact number of $4$-nomial and $5$-nomial multiples of the product of primitive polynomials. This helps us to choose a more suitable connection polynomial to resist the correlation attacks. Next, we disprove a conjecture by Maitra, Gupta, and Venkateswarlu. 
			
	\end{abstract}
	\maketitle
%	\tableofcontents 

%%%%%%%%%%%%% TEMPLATE FOR Fq15 ABSTRACTS %%%%%%%%%%%%%%%%%%
% PLEASE DO NOT MODIFY ANY OF THE COMMANDS IN THE PREAMBLE %
% AND DO NOT USE CUSTOM-DEFINED MACROS                     %
%%%%%%%%%%%%%%%%%%%%%%%%%%%%%%%%%%%%%%%%%%%%%%%%%%%%%%%%%%%%

\section{Introduction}
Linear feedback shift registers (LFSRs) are major parts in the stream cipher cryptosystems because they generate pseudorandom sequences.	
The connection polynomial associated with the LFSR needed to be primitive for cryptographic security. Generally, the connection polynomials are primitive polynomials over GF(2). Therefore, we consider polynomials over GF(2) only. There are different kinds of key-stream generators like filter generators, combination generators, clock-controlled generators, etc.\cite{types-of-stream-ciphers}. Different cryptanalytic attacks like algebraic attacks \cite{Algebraic-attacks-on-combiners-with-memory}, correlation attacks \cite{Fast-correlation-attacks-on-certain-stream-ciphers,Fast-correlation-attack-on-stream-ciphers-turbo-codes}, binary decision diagram BDD-based attacks \cite{BDD-based-Cryptanalysis-of-Stream-Cipher-A-Practical-Approach}, etc., have been developed for recovering the secret keys of LFSR-based stream ciphers.\par
It is well known that the recurrence relation of the LFSR, which is generated by the connection polynomial $f(x)$, is also satisfied by multiples of $f(x)$. If we assume that a multiple of $f(x)$ is sparse and of moderate degree, then one can apply the fast correlation attack by using the recurrence relation induced by the multiples of $f(x)$ to recover the secret key as described in \cite{Fast-correlation-attacks-on-certain-stream-ciphers}.\par
In case of combination generators, the connection polynomial is the product of the connection polynomials of constituent LFSRs. Generally, the connection polynomials of constituent LFSRs are of relatively prime degrees. To resist the correlation attack, primitive polynomials should be chosen such that the product polynomial does not have sparse multiples of lower degrees. In this article, we obtain the exact number of $4$-nomial and $5$-nomial multiples of the product of primitive polynomials with mutually co-prime degrees over GF(2). This exact count gives a better approximation for the degree of the least degree $4$-nomial and $5$-nomial multiples of the product of primitive polynomials.
\par
This article is arranged in the following manner. Section 2 consists of some fundamental definitions and results. Sections 3 and 4 contain the main results of the article. In section 3, we first obtain the exact number of $4$-nomial and $5$-nomial multiples of the product of two primitive polynomials with co-prime degrees. In section 4, we apply a recursive approach and obtain the exact number of $4$-nomial and $5$-nomial multiples of the product of $k$ primitive polynomials, where $k$ is a positive integer. Further, we discuss the degree distribution of $t$-nomial multiples, particularly for $t=4$ and $5$. In section 5, we disprove the conjecture proposed in \cite{Results-on-multiples-of-primitive-polynomials-and-their-products} regarding the least degree $t$-nomial multiples of the product of primitive polynomials over GF(2).

\section{Preliminaries}
The following notions are useful in the subsequent part of the article. Let $\mathbb{F}_{q}$ denotes a finite field with $q$ elements, where $q=p^m$, $p$ be a prime and $m$ be a positive integer. Next, $\mathbb{F}_{q}^*$ denotes the cyclic group of non-zero elements of $\mathbb{F}_{q}$. A generator of $\mathbb{F}_{q}^*$ is called a primitive element of $\mathbb{F}_{q}$. We write $f_1f_2$ to denote the product of two polynomials $f_1$ and $f_2$. Every polynomial throughout the article is over GF(2) unless otherwise stated. Let us recall some definitions and results from \cite{FFTA} and \cite{Results-on-multiples-of-primitive-polynomials-and-their-products}.

\begin{defn}\emph{\cite{FFTA}}
	An irreducible polynomial of degree $d$ is called a primitive polynomial over $\mathbb{F}_q$ if its roots are primitive elements in $\mathbb{F}_{q^d}$.
\end{defn}

\begin{defn}\emph{\cite{Results-on-multiples-of-primitive-polynomials-and-their-products}}
	A polynomial with $t$ non-zero terms, one of them being the constant term, is called a $t$-nomial, or in other words, a $t$-nomial is a polynomial of weight $t$ with a non-zero constant term.
\end{defn}

\begin{defn}\emph{\cite{FFTA}}
	Let $f \in \mathbb{F}_q[x]$ be a non-zero polynomial. If $f(0)\ne0$, then the least positive integer $e$ for which $f(x)$ divides $x^e-1$ is called the order of $f(x)$ and is denoted by $ord(f)$. The order of $f(x)$ is also called the exponent of $f(x)$.
\end{defn}

\begin{theorem}\emph{\cite{FFTA}}\label{base2}
	Let $g_{1},\dots ,g_{k}$ be pairwise relatively prime non-zero polynomials over $\mathbb{F}_{q}$, and $f = g_{1}\cdots g_{k}$. Then $ord(f)$ is equal to the least
	common multiple of $ord(g_{1}),\dots ,ord(g_{k})$.
\end{theorem}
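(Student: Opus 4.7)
The plan is to prove the two divisibility relations $L \mid e$ and $e \mid L$, where $e = \mathrm{ord}(f)$ and $L = \mathrm{lcm}(\mathrm{ord}(g_1), \dots, \mathrm{ord}(g_k))$. This will yield equality since both quantities are positive integers.

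First I would show $L \mid e$. By definition of order, $f \mid x^{e}-1$. Since each $g_i$ divides $f$, transitivity gives $g_i \mid x^{e}-1$ for every $i$. Because $\mathrm{ord}(g_i)$ is the \emph{least} positive integer $n$ such that $g_i \mid x^{n}-1$, a short subsidiary lemma (which I would either cite from \cite{FFTA} or state and prove in one line by division with remainder) shows that $\mathrm{ord}(g_i) \mid e$. Hence every $\mathrm{ord}(g_i)$ divides $e$, so their least common multiple $L$ divides $e$.

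Next I would show $e \mid L$. For each $i$, $\mathrm{ord}(g_i) \mid L$, so $g_i \mid x^{\mathrm{ord}(g_i)}-1 \mid x^{L}-1$ (the second divisibility uses the elementary fact that $x^{a}-1 \mid x^{b}-1$ whenever $a \mid b$). Thus each $g_i$ divides $x^{L}-1$. Now I invoke the hypothesis that the $g_i$ are pairwise relatively prime: if pairwise coprime polynomials each divide a common polynomial, so does their product. Therefore $f = g_1 \cdots g_k$ divides $x^{L}-1$, and by the minimality defining $\mathrm{ord}(f)$ we conclude $e \mid L$.

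Combining the two divisibilities gives $e = L$, i.e. $\mathrm{ord}(f) = \mathrm{lcm}(\mathrm{ord}(g_1), \dots, \mathrm{ord}(g_k))$. I expect no real obstacle here: the argument is two applications of the universal property of order together with the two elementary facts (i) $x^{a}-1 \mid x^{b}-1 \iff a \mid b$ and (ii) coprime divisors of a common polynomial multiply to a divisor. The only point requiring mild care is (ii), which follows immediately from unique factorization in $\mathbb{F}_q[x]$ by an induction on $k$; this is where I would be most careful to cite or justify the step cleanly rather than leave it implicit.
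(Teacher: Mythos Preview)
Your argument is correct and is precisely the standard proof. Note, however, that the paper does not supply its own proof of this theorem at all: it is quoted verbatim from Lidl--Niederreiter \cite{FFTA} as background, with no argument given. So there is nothing in the paper to compare against; your write-up would serve perfectly well as the omitted justification. One small wording point: in the second half you write ``by the minimality defining $\mathrm{ord}(f)$ we conclude $e\mid L$'', but bare minimality only gives $e\le L$; the divisibility $e\mid L$ again uses the subsidiary lemma you already invoked in the first half (that $f\mid x^n-1$ iff $\mathrm{ord}(f)\mid n$). You clearly know this, but it would read more cleanly to cite that lemma at both places rather than appeal to ``minimality'' the second time.
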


\begin{theorem}\label{base}\emph{\cite{Results-on-multiples-of-primitive-polynomials-and-their-products}}
	Consider $k$ polynomials $f_{1}(x), f_{2}(x), \dots , f_{k}(x)$ over \textnormal{GF(2)}  having degrees $d_{1}, d_{2}, \dots , d_{k}$ and exponents $e_{1}, e_{2}, \dots , e_{k}$ respectively, with the following conditions:\\
	\noindent
	$(1)$ $e_{1}, e_{2}, \dots , e_{k}$ are pairwise co-prime,\\
	$(2)$ $f_{1}(0) = f_{2}(0) = \cdots = f_{k}(0) = 1$,\\
	$(3)$ $\text{gcd} (f_{r} (x), f_{s}(x)) = 1 \ for\  1\leq r \neq s \leq k$,\\
	$(4)$ number of $t$-nomial multiples $(\text{with degree} < e_{r})$ of $f_{r} (x)$ is $ N_{r,t}$.\\
	Then the number of $t$-nomial multiples with degree $<e_{1} e_{2} \cdots  e_{k}$ of the product polynomial
	$f_{1}f_{2} \cdots  f_{k}$ is at least $((t-1)!)^{k-1}\prod_{r = 1}^{r = k}N_{r,t}$.
\end{theorem}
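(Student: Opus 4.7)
The plan is to set up an explicit injection whose source has cardinality $((t-1)!)^{k-1}\prod_{r=1}^{k} N_{r,t}$ and whose target is the set of $t$-nomial multiples of $f_1\cdots f_k$ of degree less than $e_1\cdots e_k$. The underlying observation is that since the $e_r$ are pairwise coprime and $f_r\mid x^{e_r}-1$, divisibility by $f_r$ depends only on exponents reduced modulo $e_r$, and by condition $(3)$ a polynomial is divisible by $f_1\cdots f_k$ iff it is divisible by each $f_r$. Writing each $t$-nomial multiple of $f_r$ of degree $<e_r$ in the canonical form
\[
1+x^{a_1^{(r)}}+x^{a_2^{(r)}}+\cdots+x^{a_{t-1}^{(r)}},\qquad 0<a_1^{(r)}<a_2^{(r)}<\cdots<a_{t-1}^{(r)}<e_r,
\]
the source of the injection is the set of tuples comprising one such chosen multiple for each $f_r$ together with a permutation $\sigma_r\in S_{t-1}$ for every $r\in\{2,\dots,k\}$; no permutation is used for $r=1$, as its indexing is pinned down by the canonical order.

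Given a source tuple, I would apply the Chinese Remainder Theorem to obtain, for each $i\in\{1,\dots,t-1\}$, the unique $c_i\in\{0,1,\dots,e_1\cdots e_k-1\}$ with $c_i\equiv a_i^{(1)}\pmod{e_1}$ and $c_i\equiv a_{\sigma_r(i)}^{(r)}\pmod{e_r}$ for $r\ge 2$, and form $g(x)=1+\sum_{i=1}^{t-1}x^{c_i}$. Three verifications are then routine. First, the $c_i$ are distinct and positive, so $g$ is a genuine $t$-nomial of degree $<e_1\cdots e_k$: if $c_i=c_j$ then reducing mod $e_1$ forces $a_i^{(1)}=a_j^{(1)}$, hence $i=j$; and positivity follows from $c_i\equiv a_i^{(1)}\ge 1\pmod{e_1}$. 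Second, $g\equiv 0\pmod{f_r}$ for every $r$, since $x^{c_i}\equiv x^{a_{\sigma_r(i)}^{(r)}}\pmod{f_r}$ makes the reduction of $g$ modulo $f_r$ coincide with the chosen $f_r$-multiple. Third, by pairwise coprimeness of the $f_r$, this implies $f_1\cdots f_k\mid g$.

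The step requiring the most care, and the one I expect to be the main obstacle, is injectivity of the map. Given $g(x)$, list its nonzero exponents and sort them in increasing order of residue modulo $e_1$; since the $a_i^{(1)}$ are distinct this ordering is well-defined, and by construction it matches the canonical labeling, so the ordered tuple $(c_1,\dots,c_{t-1})$ is recovered. From this tuple the multiset $\{c_i\bmod e_r\}$ determines the chosen $f_r$-multiple, and $\sigma_r$ is read off as the unique bijection satisfying $c_i\bmod e_r=a_{\sigma_r(i)}^{(r)}$. Hence distinct source tuples yield distinct polynomials $g$, proving the claimed lower bound. The reason the theorem asserts only a lower bound is that there may exist additional $t$-nomial multiples of $f_1\cdots f_k$ whose reduction modulo some $f_r$ has strictly fewer than $t$ terms because of exponent collisions cancelling in $\mathrm{GF}(2)$; such multiples lie outside the image of the construction above.
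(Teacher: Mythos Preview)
The paper itself does not prove this theorem; it is quoted from \cite{Results-on-multiples-of-primitive-polynomials-and-their-products} as a preliminary result. Your CRT-based construction---fixing the exponent order for $r=1$, permuting for $r\ge 2$, and lifting via the Chinese Remainder Theorem---is exactly the method of the cited reference, and it is the same mechanism the present paper deploys in the proof of Theorem~\ref{basemy} (there for $t=5$, $k=2$) and alludes to in Conjecture~\ref{conjecture}. Your argument is correct, including the injectivity step: sorting the nonconstant exponents of $g$ by their residues modulo $e_1$ recovers the ordered tuple $(c_1,\dots,c_{t-1})$, after which the chosen $f_r$-multiple and the permutation $\sigma_r$ are read off from the residues modulo $e_r$.
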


\begin{cor}\emph{\cite{Results-on-multiples-of-primitive-polynomials-and-their-products}}
	Let $f_{1}(x), f_{2}(x), \dots, f_{k}(x)$ be $k$ primitive polynomials of degrees $d_{1}, d_{2}, \dots, d_{k}$ respectively, where $d_{1}, d_{2}, \dots, d_{k}$ are pairwise co-prime. Then the number of t-nomial multiples with degree $<(2^{d_{1}} - 1) (2^{d_{2}} - 1) \cdots (2^{d_{k}} - 1)$ of $f_{1}f_{2}\cdots f_{k}$ is at least $((t-1)!)^{k-1}\prod_{r = 1}^{r = k}N_{r,t}$, where $N_{r,t}$ is the number of t-nomial multiples with degree $< 2^{d_{r}} - 1$ of $f_{r}(x)$.
\end{cor}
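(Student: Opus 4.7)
The statement is a direct specialization of Theorem \ref{base} to primitive polynomials over GF(2), so my plan is simply to check the four hypotheses of Theorem \ref{base} in this setting and then substitute.

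First I would identify the exponents $e_r$ of the $f_r$. By definition, a primitive polynomial of degree $d_r$ over GF(2) has as its roots the primitive elements of $\mathbb{F}_{2^{d_r}}$, which have multiplicative order $2^{d_r} - 1$. Since the order of an irreducible polynomial equals the multiplicative order of any of its roots in the splitting field, $e_r = 2^{d_r} - 1$. Substituting this into the degree bound $e_1 e_2 \cdots e_k$ from Theorem \ref{base} yields precisely $\prod_{r=1}^{k}(2^{d_r} - 1)$, and the $N_{r,t}$ appearing in hypothesis (4) of Theorem \ref{base} becomes the $N_{r,t}$ of the corollary.

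Next I would check the three remaining hypotheses. For hypothesis (1) I would invoke the classical identity $\gcd(2^a - 1, 2^b - 1) = 2^{\gcd(a,b)} - 1$: since the $d_r$ are pairwise coprime, the $e_r = 2^{d_r} - 1$ are pairwise coprime as well. For hypothesis (2), an irreducible polynomial over GF(2) having $0$ as a root must equal $x$, which is not primitive; hence $f_r(0) \neq 0$, and over GF(2) this forces $f_r(0) = 1$. For hypothesis (3), the $f_r$ are distinct irreducible polynomials, so $\gcd(f_r, f_s) = 1$ whenever $r \neq s$.

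With all four hypotheses of Theorem \ref{base} verified, its conclusion immediately gives the stated lower bound $((t-1)!)^{k-1}\prod_{r=1}^{k} N_{r,t}$ on the number of $t$-nomial multiples of $f_1 f_2 \cdots f_k$ of degree less than $\prod_{r=1}^{k}(2^{d_r} - 1)$. There is no real obstacle: the only non-trivial ingredient is the Mersenne gcd identity, which I would either cite or establish in one line by applying the Euclidean algorithm to $2^a - 1$ and $2^b - 1$.
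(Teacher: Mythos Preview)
Your proposal is correct and is exactly the intended derivation: the corollary is simply Theorem \ref{base} specialized to primitive polynomials, and the paper itself offers no proof beyond citing the source \cite{Results-on-multiples-of-primitive-polynomials-and-their-products}. One tiny quibble: in verifying hypothesis (3) you assert the $f_r$ are \emph{distinct} irreducibles, but pairwise-coprime degrees allow $d_i=d_j=1$, in which case $f_i=f_j=x+1$; this degenerate case is harmless since then $N_{i,t}=0$ and the bound is trivial, but you may want to note it.
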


\begin{cor}\label{cor3}\emph{\cite{Results-on-multiples-of-primitive-polynomials-and-their-products}}
	The number of $3$-nomial multiples with degree $< e_{1} e_{2} \cdots  e_{k}$ of the product $f_{1} f_{2} \cdots  f_{k}$ is $(2!)^{k-1}\prod_{r = 1}^{r = k}N_{r,3}$.
\end{cor}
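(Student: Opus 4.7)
Theorem~\ref{base} already supplies the lower bound $(2!)^{k-1}\prod_r N_{r,3}$, so it suffices to establish a matching upper bound. The plan is to reduce the divisibility condition modulo each $e_r$ and recombine the local data via the Chinese Remainder Theorem.

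Fix a candidate 3-nomial $1+x^a+x^b$ with $0<a<b<E:=e_1 e_2\cdots e_k$. Because the $f_r$ are pairwise coprime, $f_1 f_2\cdots f_k$ divides $1+x^a+x^b$ if and only if every $f_r$ does. Using $x^{e_r}\equiv 1\pmod{f_r}$, this is in turn equivalent to $f_r\mid 1+x^{a\bmod e_r}+x^{b\bmod e_r}$ for every $r$. If any of $a\bmod e_r$ or $b\bmod e_r$ were zero, or if $a\equiv b\pmod{e_r}$, the right-hand polynomial would collapse in characteristic~$2$ to a monomial or to the constant~$1$, which no $f_r$ can divide (since $f_r(0)=1$ and $\deg f_r\ge 1$). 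Consequently, for every $r$ the residues $u_r:=a\bmod e_r$ and $v_r:=b\bmod e_r$ automatically lie in $\{1,\dots,e_r-1\}$ and satisfy $u_r\ne v_r$.

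Introduce the local set
\[
T_r:=\{(u,v):1\le u,v\le e_r-1,\ u\ne v,\ f_r\mid 1+x^u+x^v\}.
\]
Each of the $N_{r,3}$ 3-nomial multiples of $f_r$ of degree $<e_r$ contributes both $(u,v)$ and $(v,u)$ to $T_r$, giving $|T_r|=2N_{r,3}$. By the Chinese Remainder Theorem, the map $(a,b)\mapsto\bigl((u_r,v_r)\bigr)_{r=1}^{k}$ is a bijection between the ordered pairs $(a,b)$ with $0<a,b<E$, $a\ne b$, $f_1\cdots f_k\mid 1+x^a+x^b$, and the product $\prod_r T_r$; the nonvanishing and distinctness of $a,b$ come for free from $u_r,v_r\ne 0$ and $u_r\ne v_r$. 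This yields $2^k\prod_r N_{r,3}$ ordered pairs, and dividing by the swap symmetry $(a,b)\leftrightarrow(b,a)$ produces $2^{k-1}\prod_r N_{r,3}=(2!)^{k-1}\prod_r N_{r,3}$ unordered 3-nomial multiples, matching the lower bound.

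The only genuinely delicate step is the degeneracy check: one must rule out modular collapses so that the CRT bijection lands precisely inside $\prod_r T_r$ without over- or undercounting. Once that is in hand the rest is combinatorial bookkeeping, and I expect this is also the step that will require substantially more care when the same strategy is later adapted to $t=4$ and $t=5$, since the forbidden reduction patterns proliferate and the swap factor per underlying multiple grows.
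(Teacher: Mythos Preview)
Your argument is correct. Note that the paper does not supply its own proof of this corollary---it is quoted from \cite{Results-on-multiples-of-primitive-polynomials-and-their-products}---but your CRT-plus-degeneracy-check strategy is exactly the mechanism the paper employs in the proof of Theorem~\ref{basemy} (reduce modulo each $e_r$, rule out collapses, and reassemble via the Chinese Remainder Theorem), so your approach aligns with the paper's methodology and with the cited source.
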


Let $f(x)$ be a polynomial of exponent $e$, and $h_1, h_2, \dots, h_s $ are all the $t$-nomial multiples of $f(x)$ having degrees less than $e$. Lemma \ref{corollary} provides us the sum of the degrees of $h_1, h_2, \dots, h_s.$ We will use Lemma \ref{corollary} to find the cardinality of the set $\mathcal{A}_{r}$ in section 3. 

\begin{lem}\label{corollary}
	Let $f(x)$ be a polynomial over GF(2) of degree $d$ and exponent $e$. If the number of $t$-nomial multiples of $f(x)$ with degree $<e$ is $N_{t}$, then
	$$\sum_{i=1}^{N_{t}}d_{i} = \frac{(t-1)}{t} e N_{t} $$
	where $d_{i}$ is the degree of $t$-nomial multiple of $f(x)$.
\end{lem}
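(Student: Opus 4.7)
The plan is to exploit the fact that the set of degree-$(<e)$ multiples of $f$ is closed under cyclic shifts of exponents modulo $e$, which follows from $x^e\equiv 1\pmod{f(x)}$.

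First, I would identify each $t$-nomial multiple of $f$ of degree $<e$ with a $t$-element subset $T\subset\{0,1,\ldots,e-1\}$ containing $0$; writing $T=\{0=s_0<s_1<\cdots<s_{t-1}\}$, the corresponding polynomial is $\sum_{j=0}^{t-1}x^{s_j}$ and its degree is $s_{t-1}=\max T$. Let $\mathcal{M}$ denote this collection of subsets, so $|\mathcal{M}|=N_t$. The key shift-invariance then says: for any $T\in\mathcal{M}$ and any $u\in T$, the rotated subset $T-u\pmod e$ again lies in $\mathcal{M}$, because $x^{-u}\bigl(\sum_{s\in T}x^s\bigr)\bmod(x^e-1)$ is still a multiple of $f$ (using $f(x)\mid x^e-1$), and it contains the constant term since $0=u-u\in T-u$.

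Second, I would compute $\sum_{u\in T}\deg(T-u)$ for a fixed $T\in\mathcal{M}$ in terms of the cyclic gaps $g_j=s_{j+1}-s_j$ (for $j=0,\ldots,t-2$) and $g_{t-1}=e-s_{t-1}$, which satisfy $g_0+\cdots+g_{t-1}=e$. A direct calculation (sorting the residues $s_j-s_i\bmod e$) shows that the maximum element of $T-s_i$ equals $e-g_{i-1\bmod t}$; the case $i=0$ recovers $\deg T=s_{t-1}=e-g_{t-1}$. Summing over $i$,
\[
\sum_{i=0}^{t-1}\deg(T-s_i)=te-\sum_{j=0}^{t-1}g_j=(t-1)e.
\]

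Finally, I would close with a double count. Consider the map $\Phi\colon\{(T,u):T\in\mathcal{M},\,u\in T\}\to\mathcal{M}$ sending $(T,u)\mapsto T-u$; by shift-invariance it is well defined, and it is $t$-to-$1$ onto $\mathcal{M}$ since the fiber over $T'\in\mathcal{M}$ is $\{(T'+u,u):u\in -T'\}$, a set of size $t$. Combining this with the previous step,
\[
t\sum_{T'\in\mathcal{M}}\deg(T')=\sum_{T\in\mathcal{M}}\sum_{u\in T}\deg(T-u)=(t-1)e\,N_t,
\]
which is the claimed identity. The only delicate step is the gap calculation in the third paragraph; identifying the new maximum as $e$ minus the appropriate cyclic gap is where one could easily slip, but the rest is routine bookkeeping.
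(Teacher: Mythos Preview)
Your argument is correct and is essentially the standard cyclic-shift argument that the paper invokes by citing \cite[p.~318, Corollary~2]{Results-on-multiples-of-primitive-polynomials-and-their-products}: group the $t$-nomial multiples into rotation classes via $T\mapsto T-u$, observe that the $t$ rotations of a fixed $T$ have degrees summing to $(t-1)e$ by the cyclic-gap computation, and finish with a double count. Your framing via the $t$-to-$1$ map $\Phi$ is a clean way to avoid worrying about whether rotation orbits might collapse, but otherwise the idea is the same.
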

The proof follows directly from the proof of \cite[pp. 318, Corollary 2] {Results-on-multiples-of-primitive-polynomials-and-their-products}.

\section{Enumeration of $4$-nomial and $5$-nomial multiples of the product of two primitive polynomials}
Let us consider a key-stream generator in which a non-linear Boolean function $f(x_1,\dots,x_n)$ is used to combine the outputs of $n$ LFSRs. Let the Boolean function $f(x_1,\dots,x_n)$ is of $(k-1)$-th order correlation immune. In this scenario, the correlation attack can be employed by taking $k$ LFSRs together having connection polynomials $p_i(x), 1 \leq i \leq k$. The correlation attack mainly exploits the statistical dependence between the keystream and the stream generated by the block of $k$ LFSRs. To execute the attack we need $t$-nomial multiples of the product polynomial $\prod_{i=1}^{k} p_i (x)$. The implementation of the attack uses multiples of the product polynomial of low weight.
For cryptographic applications, the degrees of the primitive polynomials are taken to be co-prime to each other. Hence it is important to investigate the exact number of $t$-nomial multiples of the product of primitive polynomials, specifically for small values of $t$.\par
Theorem \ref{base} provides us with a lower bound for $t$-nomial multiples of the product of $k$ polynomials with specified conditions. The lower bound is not the exact count for $t \geq 4$. The following Proposition provides us with the exact number of $4$-nomial multiples of the product of two polynomials. The proof follows directly from \cite[Proposition 1] {Results-on-multiples-of-primitive-polynomials-and-their-products}. Later in section \ref{recursive_approach}, we apply a recursive approach to Proposition \ref{prop} and obtain the exact number of $4$-nomial multiples of the product of $k$ polynomials.

\begin{prop}\label{prop}
	Consider two polynomials $f_{1}(x), f_{2}(x)$ over \textnormal{GF(2)} of degrees $d_{1}, d_{2}$ and exponents $e_{1}, e_{2}$ respectively, such that:\\
	\noindent
	$(1)$ $gcd(e_{1}, e_{2})=1$,\\
	$(2)$ $f_{1}(0) = f_{2}(0) = 1$,\\
	$(3)$ $gcd (f_{1} (x), f_{2}(x)) = 1$,\\
	$(4)$ number of 4-nomial multiples $($with degree $ < e_{r} )$ of $f_{r} (x)$ is $ N_{r,4}$.\\
	Then the exact number of 4-nomial multiples with degree $< e_{1} e_{2}$ of the product polynomial
	$f_{1}f_{2}$ is
	\begin{center}
		$6N_{1,4}N_{2,4} + (e_{1} - 1)(e_{2} - 1) + (3(e_{1} - 1) + 1)N_{2,4} + (3(e_{2} - 1) + 1)N_{1,4}$.
	\end{center}
\end{prop}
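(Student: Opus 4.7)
The plan is to classify 4-nomial multiples of $f_1 f_2$ by what their exponent residues look like modulo $e_1$ and modulo $e_2$, then count each class via the Chinese Remainder Theorem. Since $\gcd(f_1, f_2) = 1$, a polynomial is a multiple of $f_1 f_2$ iff it is a multiple of each factor. Given a 4-nomial $h(x) = 1 + x^{a_1} + x^{a_2} + x^{a_3}$ with $0 < a_1 < a_2 < a_3 < e_1 e_2$, the relation $x^{e_r} \equiv 1 \pmod{f_r}$ implies $h$ is a multiple of $f_r$ iff the polynomial $\bar h^{(r)}$ obtained by reducing each exponent modulo $e_r$ and combining like terms over GF(2) is a multiple of $f_r$.

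Next I would analyze the shape of $\bar h^{(r)}$ through the partition structure of the multiset $M^{(r)} = \{0,\, a_1 \bmod e_r,\, a_2 \bmod e_r,\, a_3 \bmod e_r\}$. The weight of $\bar h^{(r)}$ equals the number of residues of odd multiplicity. Only weights $0$ and $4$ can yield a multiple of $f_r$: weight $1$ is impossible because $f_r(0)=1$, and weight $2$ is impossible because a binomial $x^a+x^b$ with $|a-b|<e_r$ cannot vanish modulo $f_r$ by minimality of $e_r=\mathrm{ord}(f_r)$. The weight-$4$ case corresponds to a 4-nomial multiple of $f_r$ of degree $<e_r$ (counted by $N_{r,4}$), while the weight-$0$ case further splits by partition type into $(4)$, meaning all four residues equal $0$, and $(2{+}2)$, meaning one $a_i$ is $\equiv 0$ and the other two share a common nonzero residue.

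The enumeration then breaks into four joint types, producing the four summands of the formula. Type $(4,4)$: both reductions are 4-nomial multiples; matching the three nonzero exponents of a multiple of $f_1$ with those of a multiple of $f_2$ admits $3!=6$ pairings, and CRT gives a unique integer in $[1,e_1 e_2-1]$ per pairing, yielding $6\,N_{1,4}N_{2,4}$. Type $(4,0)$: the $(4)$-sub-partition forces $a_i=k_i e_2$ with residues $k_i e_2 \bmod e_1$ running bijectively, giving $N_{1,4}$; the $(2{+}2)$-sub-partition adds $3$ choices for which $a_i$ is the zero-mod-$e_2$ slot and $e_2-1$ choices for the common nonzero residue, giving $3(e_2-1)N_{1,4}$; total $(3(e_2-1)+1)N_{1,4}$. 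Type $(0,4)$ is symmetric and yields $(3(e_1-1)+1)N_{2,4}$. Type $(0,0)$: the $(4,4)$, $(4,2{+}2)$, and $(2{+}2,4)$ combinations each force some $a_i \equiv 0 \pmod{e_1 e_2}$, hence $a_i=0$, contradicting $a_i > 0$; only $(2{+}2,2{+}2)$ with distinct zero-slots mod $e_1$ and mod $e_2$ survives, and for each $(c_1,c_2) \in [1,e_1-1]\times[1,e_2-1]$ CRT produces a unique valid unordered triple, giving $(e_1-1)(e_2-1)$.

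Summing the four contributions reproduces the claimed formula. The principal obstacle is the joint case analysis for type $(0,0)$: one must carefully rule out the degenerate combinations where the partition zero-slots mod $e_1$ and mod $e_2$ coincide, and then verify by direct substitution that the triples produced by the admissible $(2{+}2, 2{+}2)$ configuration truly satisfy $\bar h^{(1)} = \bar h^{(2)} = 0$. A secondary point requiring care is the bijectivity of the parameterizations used for types $(4,0)$ and $(4,4)$, which must be checked to avoid over- or under-counting, particularly ensuring that the distinguished roles (which residue plays which part) are uniquely recoverable from the resulting unordered triple $\{a_1,a_2,a_3\}$.
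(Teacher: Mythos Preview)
Your proposal is correct and follows essentially the same CRT-based case analysis as the paper (which defers the proof to \cite{Results-on-multiples-of-primitive-polynomials-and-their-products}, Proposition~1, and whose method is displayed in full for the $5$-nomial case in Theorem~\ref{basemy}). The only organizational difference is direction: the paper builds $4$-nomial multiples of $f_1f_2$ from pairs $(p_1,p_2)$ of ``padded'' multiples of the factors and then argues completeness, whereas you start from an arbitrary $4$-nomial multiple of $f_1f_2$, reduce modulo $e_r$, and classify---this makes the exactness of the count immediate, but the underlying partition into types (both reductions weight $4$; one weight $4$ and one zero; both zero) and the resulting four summands are identical.
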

In Theorem \ref{basemy}, we obtain the exact number of $5$-nomial multiples of the product of two primitive polynomials. First, we introduce some notations as follows.\par
Let $f_1(x)$ and $f_2(x)$ be two polynomials over GF(2), and we want to find the exact number of $5$-nomial multiples of the product polynomial $f_1f_2$. We define a set $\mathcal{A}_{r}$ corresponding to $f_r(x)$, where $r=1,2$. Let the number of $3$-nomial multiples of $f_r(x)$ be $N_{r,3}$, and $h_j(x)$ denotes a $3$-nomial multiple of $f_{r}(x)$, of degree $d_j< e_{r}$, $1 \leq j \leq N_{r,3}$.
\begin{center}
	$\mathcal{A}_{r} := \{ x^{\lambda_{j}}h_j(x)
	: \ 1 \leq j \leq N_{r,3},\  1\leq \lambda_{j} \leq e_{r}-1-d_j\}$
\end{center}
Let $n_r$ be the cardinality of $\mathcal{A}_{r}$, by Lemma \ref{corollary} we get
\begin{equation*}
	\begin{aligned}
		|\mathcal{A}_{r}| 
		= & {}\sum_{j=1}^{N_{r,3}}(e_r-1-d_j)\\
		= & {} (e_r-1)N_{r,3}-\sum_{j=1}^{N_{r,3}}d_j\\
		= & {} (e_r-1)N_{r,3}- \frac{2}{3} e_r N_{r,3}\\
		= & {} (\frac{e_r}{3}-1)N_{r,3}.
	\end{aligned}
\end{equation*}

\begin{theorem}\label{basemy}
	Consider two polynomials $f_1(x), f_2(x)$ over GF(2) of degrees $d_1,d_2$ and exponents $e_1,e_2$ respectively, such that:\\
	\noindent
	$(1)$ $gcd(e_{1}, e_{2})=1$,\\
	$(2)$ $f_{1}(0) = f_{2}(0) = 1$,\\
	$(3)$ $gcd (f_{1} (x), f_{2}(x)) = 1$\\
	$(4)$ number of $5$-nomial and $3$-nomial multiples $(\text{with degree} < e_{r})$ of $f_{r} (x)$ are $ N_{r,5}$ and $ N_{r,3}$, respectively.\\
	\noindent
	Then the exact number of $5$-nomial multiples of the product polynomial $f_1f_2$ having degree $< e_1e_2$ is equal to
	
	\parbox[t]{4.9in}{
		$4!N_{1,5} N_{2,5} + N_{1,3}N_{2,5}[12(e_1-2)+8] +N_{2,3}N_{1,5}[12(e_2-2)+8] +4![n_1N_{2,5}+n_2N_{1,5}] + 18n_1n_2 + n_{1}N_{2,3}[12(e_2-3)+14]+ n_{2}N_{1,3}[12(e_1-3)+14]+ N_{1,3}N_{2,3}[5(e_1-3)(e_2-3)+7(e_1-3)+7(e_2-3)+5]$,}
	
	\noindent
	where $n_r= |\mathcal{A}_{r}|$.
\end{theorem}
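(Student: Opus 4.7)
The plan is to apply the Chinese Remainder Theorem. Since $\gcd(e_1,e_2) = 1$, exponents $a \in \{0,1,\dots,e_1 e_2 - 1\}$ are in bijection with pairs $(a \bmod e_1, a \bmod e_2) \in \mathbb{Z}/e_1 \times \mathbb{Z}/e_2$. A $5$-nomial $P(x) = 1 + \sum_{i=1}^{4} x^{a_i}$ of degree $<e_1 e_2$ is therefore encoded by a $5$-set of distinct pairs containing $(0,0)$. Because $f_r \mid x^{e_r}-1$, divisibility of $P$ by $f_r$ is equivalent to the reduction $\tilde P_r := \sum_{i=0}^{4} x^{a_i \bmod e_r}$ (of degree $<e_r$ after $\mathbb{F}_2$-cancellation of like terms) being a multiple of $f_r$. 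The weight of $\tilde P_r$ is $1$, $3$, or $5$, and since $\gcd(f_r, x)=1$ the weight-$1$ case can never produce a multiple of $f_r$.

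The central step is a case analysis on the multiplicity pattern of the five residues modulo $e_r$. Writing $m_0$ for the multiplicity of $0$, the admissible patterns are: \textbf{(I)} $m_0=1$ with all five residues distinct, so $\tilde P_r$ is itself a $5$-nomial multiple of $f_r$ ($N_{r,5}$ options); \textbf{(II)} $m_0=1$ with residue partition $(2,1,1,1)$, giving a $3$-nomial multiple together with a free ``pair'' residue ($e_r-3$ options per multiple); \textbf{(III)} $m_0=1$ with partition $(3,1,1)$, a $3$-nomial multiple with two sub-cases; \textbf{(IV)} $m_0=2$ with three distinct nonzero residues, so $\tilde P_r = x^{\lambda} h(x)$ for some $3$-nomial multiple $h$ of $f_r$, i.e.\ an element of $\mathcal{A}_r$ ($n_r$ options); \textbf{(V)} $m_0=3$ with two distinct nonzero residues, again a $3$-nomial multiple. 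Grouping (II), (III), (V) as ``plain $3$-nomial reductions'' leaves three aggregate possibilities per side, hence $3 \times 3 = 9$ joint patterns.

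For each joint pattern I count, via CRT, the number of ways to realize the prescribed first- and second-coordinate multisets as four distinct pairs avoiding $(0,0)$. Sample counts: (I,I) gives $4!$ bijections between the two sets of four nonzero residues, yielding $4!\,N_{1,5}N_{2,5}$; (IV,I) also gives $4!$, yielding $4!\,n_1 N_{2,5}$; (IV,IV) gives $4!-3!=18$ after subtracting matchings that would force $(0,0)$; and the ($3$-nomial on side $r$, $5$-nomial on the other side) case sums the sub-counts $12(e_r-3)$, $8$, $12$ from (II), (III), (V) into the coefficient $12(e_r-2)+8$. An analogous computation gives $12(e_s-3)+14$ for the (shifted on $r$, $3$-nomial on $s$) joint case, where the sub-case (IV,V) contributes $6$ rather than $12$ because matchings producing $(0,0)$ must be excluded.

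The main obstacle is the ($3$-nomial, $3$-nomial) joint case, which splits further into $9$ sub-cases by (II)/(III)/(V) on each side. The counts here are delicate: several sub-cases involve doubled first- and doubled second-coordinates simultaneously, making pair-distinctness non-trivial, and the sub-case (III, III) contributes zero because a triple of equal first-coordinates paired with a triple of equal second-coordinates necessarily produces a repeated pair. A careful sub-case-by-sub-case enumeration yields, per pair $(h_1, h_2)$ of $3$-nomial multiples, the coefficient $5(e_1-3)(e_2-3) + 7(e_1-3) + 7(e_2-3) + 5$. Throughout, one must verify in every sub-case both the pairwise distinctness of the four pairs and the avoidance of $(0,0)$. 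Collecting the eight surviving contributions gives precisely the formula in the statement.
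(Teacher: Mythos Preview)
Your proposal is correct and follows essentially the same approach as the paper: both use the Chinese Remainder Theorem to encode a $5$-nomial multiple by the pair of reduced polynomials $(\tilde P_1,\tilde P_2)$, classify each reduction according to whether it is a genuine $5$-nomial, a shifted $3$-nomial (your pattern IV, the paper's set $\mathcal{A}_r$), or a $3$-nomial with a repeated dummy exponent (your patterns II/III/V, the paper's $x^{i_1}+x^{i_2}+x^k+x^k+1$), and then count bijections avoiding the pair $(0,0)$ in each of the resulting joint cases. Your partition-based bookkeeping (I--V) is slightly more systematic than the paper's parameterization by the auxiliary index $k$, but the two case structures are in one-to-one correspondence and the individual counts match term by term.
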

\begin{proof}
	Let $p_r(x)=x^{i_{1r}}+x^{i_{2r}}+ x^{i_{3r}} +x^{i_{4r}}+1$ be a multiple of $f_r(x)$ with degree $<e_r$, $r=1,2$. Clearly, $p_r(x)$ has some choices; it can be a $5$-nomial multiple or a $3$-nomial with an extra term $x^k+x^k$, etc. Since we are working with polynomials over GF$(2)$, these types of terms do not change the meaning of the polynomial. Now we discuss the method to find $5$-nomial multiples of the product polynomial $f_1f_2$ with the help of $p_1(x)$ and $p_2(x)$. Let us consider the system of congruences 
	\begin{center}
		$I_{s}\equiv i_{s1}\  \text{mod}\  e_1$\\
		$I_{s}\equiv i_{s2}\  \text{mod}\  e_2$
	\end{center}
	where $s= 1,2,\dots, 4$. Solving the system by the Chinese remainder theorem, we get a unique solution $\text{mod}~ e_1e_2$ for $s= 1,2,\dots, 4$.\par
	Now we will show that $x^{I_{1}}+x^{I_{2}}+x^{I_{3}} +x^{I_{4}}+1$ is a multiple of the product polynomial $f_1f_2$. For this, it is sufficient to show that $x^{I_{1}}+x^{I_{2}}+x^{I_{3}} +x^{I_{4}}+1$ is divisible by $f_r(x),\ r=1,2$. Since $e_r$ is the exponent of $f_r(x)$, we need to show that $x^{I_{1}\textrm{mod}~ e_r}+x^{I_{2}\textrm{mod}~ e_r}+x^{I_{3}\textrm{mod}~ e_r} +x^{I_{4} \textrm{mod}~ e_r}+1$ is divisible by $f_r(x)$.
	We have $i_{sr}\equiv I_{s}~\textrm{mod}~e_r, r=1,2,~ \& ~s= 1,2,\dots,4$. Thus $x^{I_{1}\textrm{mod}~ e_r} + x^{I_{2} \textrm{mod}~ e_r}+x^{I_{3}\textrm{mod}~ e_r}+x^{I_{4}\textrm{mod}~ e_r}+1$ is equal to $x^{i_{1r}}+x^{i_{2r}}+x^{i_{3r}}+x^{i_{4r}}+1$, which is $p_r(x)$. Hence $x^{I_{1}}+x^{I_{2}}+x^{I_{3}}+x^{I_{4}}+1$ is divisible by $f_r(x)$.\par
	As we require $x^{I_{1}}+x^{I_{2}}+x^{I_{3}}+x^{I_{4}}+1$ to be a $5$-nomial multiple of the product polynomial $f_1f_2$, we use the following rule. Corresponding to $p_1(x)$, we fix the powers of $x$ in order $i_{11}, i_{21}, i_{31}, i_{41}$. Now corresponding to $p_2(x)$, we consider all the possible permutations of $i_{12}, i_{22}, i_{32}, i_{42}$ such that $I_{l} \not\equiv I_{m}~\textrm{mod}~e_1e_2,$ whenever $l\ne m$.
	In this way, $x^{I_{1}}+x^{I_{2}}+x^{I_{3}} +x^{I_{4}}+1$ turns out to be a $5$-nomial multiple of the product polynomial $f_1f_2$. In the following cases, we consider all the possible choices of $p_1(x)$ and $p_2(x)$, and evaluate the corresponding number of $t$-nomial multiples of $f_1f_2$. Unless otherwise stated, we take the degrees of $p_r(x)$ and the multiples of $f_1f_2$ less than $e_r$ and $e_1e_2$, respectively.
	
	\begin{case}
		Let $p_r(x)=x^{i_{1r}}+x^{i_{2r}}+x^{i_{3r}}+x^{i_{4r}}+1$ be a $5$-nomial multiple of $f_r(x)$, $r=1,2$. As discussed above, we fix the powers of $x$ in $p_1(x)$ in order $i_{11}, i_{21}, i_{31}, i_{41}$. Now we have $4!$ number of permutations of $i_{12}, i_{22}, i_{32}, i_{42}$ such that $I_{l} \not\equiv I_{m}~\textrm{mod}~e_1e_2,$ when $l\ne m$. Hence we get $4!N_{1,5} N_{2,5}$ many distinct $5$-nomial multiples of $f_1f_2$ in this case.	
	\end{case}
	
	\begin{case}
		Let $x^{i_1}+x^{i_2} +1$ be a $3$-nomial multiple of $f_{1}(x)$, choose $p_1(x)= x^{i_1}+x^{i_2}++x^k+x^k+1, 0\leq k < e_1$. Let $p_2(x)=x^{j_{1}}+x^{j_{2}}+ x^{j_{3}} +x^{j_{4}}+1$ be a $5$-nomial multiple of $f_{2}(x)$. Now, we fix the powers of $x$ in $p_1(x)$ in the order $i_{1},i_{2},k,k$, then $j_1,j_2,j_3,j_{4}$ can be placed in $\frac{4!}{2!}$ different ways if $k\ne i_1,i_2$, and $\frac{4!}{3!}$ ways if $k = i_1$ or $i_2$. Hence we get $12(e_1-2)N_{1,3}N_{2,5}+8N_{1,3}N_{2,5}$ many distinct $5$-nomial multiples of $f_1f_2$.\\
		\noindent
		Similarly, let $p_1(x)=x^{i_{1}}+x^{i_{2}}+ x^{i_{3}} +x^{i_{4}}+1$ be a $5$-nomial, and $p_2(x)=x^{j_1} +x^{j_2} +x^l +x^l +1, 0\leq l < e_2$ be a $3$-nomial, then we get $12(e_2-2)N_{1,5}N_{2,3}+8 N_{1,5}N_{2,3}$ many distinct $5$-nomial multiples of $f_1f_2$.	
	\end{case}
	
	\begin{case}
		Let $x^{a_{1}}+x^{a_{2}}+ x^{a_{3}}$ be a polynomial in $\mathcal{A}_{1}$, then we can take $p_1(x)= x^{a_{1}}+x^{a_{2}}+ + x^{a_{3}}+1+1$. Let $p_2(x)=x^{j_{1}}+x^{j_{2}}+ x^{j_{3}} +x^{j_{4}}+1$ be a $5$-nomial multiple of $f_2(x)$. Then we get $4!n_{1}N_{2,5}$ many distinct $5$-nomial multiples of $f_1f_2$.\\
		\noindent
		Similarly, if $p_1(x)=x^{i_{1}}+x^{i_{2}}+ x^{i_{3}} +x^{i_{4}}+1$ to be a $5$-nomial, and $p_2(x)= x^{b_{1}}+x^{b_{2}}+ x^{b_{3}}+1+1$, where $x^{b_{1}}+x^{b_{2}} + x^{b_{3}} \in \mathcal{A}_{2}$. Then we get $4!n_{2}N_{1,5}$ many distinct $5$-nomial multiples of $f_1f_2$.	
	\end{case}
	
	\begin{case}
		Let $x^{a_{1}}+x^{a_{2}} + x^{a_{3}} \in \mathcal{A}_{1}$ and $x^{b_{1}}+x^{b_{2}} + x^{b_{3}} \in \mathcal{A}_{2}$. Then we can take $p_1(x)= x^{a_{1}}+x^{a_{2}} + x^{a_{3}}+1+1$ and $p_2(x)= x^{b_{1}}+x^{b_{2}} + x^{b_{3}}+1+1$. Now we fix the powers of $x$ in $p_1(x)$ in order $a_1,a_2, a_{3},0$, then the number of permutations of $b_1,b_2, b_{3},0$ such that 0 never comes at the $4$-th position, is $4!-3!$. Hence, we get $18n_1n_2$ many distinct $5$-nomial multiples of $f_1f_2$.	
	\end{case}
	
	\begin{case}
		Let $p_1(x)= x^{a_{1}}+x^{a_{2}} + x^{a_{3}}+1+1$, where $x^{a_{1}}+x^{a_{2}} + x^{a_{3}} \in \mathcal{A}_{1}$, and $p_2(x)= x^{j_1}+x^{j_2}+x^l +x^l +1$, $0 \leq l < e_2$, be a $3$-nomial multiple of $f_{2}(x)$. Then the following cases arise.
		\begin{subcase}{When $l \ne 0$.}
			Then the possible number of permutations are $\frac{4!}{2!}$ if $l\neq j_1,j_2$, and $\frac{4!}{3!}$ if $l = j_1$ or $j_2$. Hence we get
			$12(e_2-3)n_{1}N_{2,3}+ 8n_{1}N_{2,3}$ many distinct $5$-nomial multiples of $f_1f_2$.
		\end{subcase}
		\begin{subcase}{When $l=0$.}
			In this case, the powers of $x$ in $p_1(x)$ are fixed in order $a_1,b_1,c_1,0$. Then $j_1,j_2,0,0,$ can be permuted in $\frac{4!}{2!}-3!$ ways such that $0$ never comes at the $4$-th position. Hence we get $6n_1N_{2,3}$ many distinct $5$-nomial multiples of $f_1f_2$ in this case.
		\end{subcase}
		\noindent
		Similarly, when $p_1(x)= x^{i_1}+x^{i_2}+x^k+x^k+1$, $0\leq k < e_1$, and $p_2(x)= x^{b_{1}}+x^{b_{2}} + x^{b_{3}}+1+1$, where $x^{b_{1}}+x^{b_{2}} + x^{b_{3}} \in \mathcal{A}_{2}$. Then we get
		$12(e_1-3)n_{2}N_{1,3}+8n_{2}N_{1,3}+ 6n_2N_{1,3}$ many distinct $5$-nomial multiples of $f_1f_2$.
	\end{case}
	
	\begin{case}
		Let $p_1(x)=x^{i_1}+x^{i_2} + x^{k} + x^{k} + 1, 0\leq k < e_1$, and $p_2(x)= x^{j_1}+x^{j_2} + x^{l}+x^{l}+1, 0\leq l < e_2,$ are $3$-nomial multiples of $f_{1}(x)$ and $f_{2}(x)$, respectively. Now the following cases arise.
		\begin{subcase}{When $k \neq 0, l\neq 0$.}
			Then the powers of $x$ in $p_2(x)$ can be placed in $\frac{1}{2!}\{\frac{4!}{2!}-2!\}$ different ways if $k\ne i_1,i_2, l\neq j_1,j_2$, in
			$\frac{1}{2!}\{\frac{4!}{3!}-2!\}$ ways if $k=i_1$ or $i_2, l\neq j_1,j_2$ or $k\neq i_1,i_2, l=j_1$ or $j_2$.\\
			This gives  
			$5(e_1-3)(e_2-3)N_{1,3}N_{2,3}+2(e_2-3)N_{1,3}N_{2,3}+2(e_1-3)N_{1,3}N_{2,3}$ many distinct $5$-nomial multiples of $f_1f_2$.	
		\end{subcase}
		\begin{subcase}{When $k=0$ but $l\neq 0$.}
			Then the powers of $x$ in $p_2(x)$ can be arranged in $\frac{1}{2!}\{\frac{4!}{2!}-2!\}$ ways if  $l\neq j_1,j_2$, and in $\frac{1}{2!}\{\frac{4!}{3!}-2!\}$ ways if $l = j_1$ or $j_2$, to get distinct $5$-nomial multiples of $f_1f_2$. Hence we get $5(e_2-3)N_{1,3}N_{2,3} + 2N_{1,3}N_{2,3}$ many $5$-nomial multiples of $f_1f_2$.
		\end{subcase}
		\begin{subcase}{When $k\ne 0$ but $l=0$.}
			In this case, we get $5(e_1-3)N_{1,3}N_{2,3} + 2N_{1,3}N_{2,3}$ many distinct $5$-nomial multiples of $f_1f_2$.
		\end{subcase}
		\begin{subcase}{When $k=0\  \& \ l=0$.}
			In this case, we get $N_{1,3}N_{2,3}$ many distinct $5$-nomial multiples of $f_1f_2$.	
		\end{subcase}
	\end{case}

	Now we add the number of $5$-nomial multiples of all the above cases and obtain the formula given in the statement of Theorem \ref{basemy}.
	To show that there is no other $5$-nomial multiple of $f_1f_2$, let $x^{I_1}+x^{I_2}+x^{I_3}+x^{I_4}+1$ be a $5$-nomial multiple of $f_1f_2$. Since $f_1f_2$ divides $x^{I_1}+x^{I_2}+x^{I_3}+x^{I_4}+1$, this implies $f_r(x)$ divides $x^{I_1}+x^{I_2}+x^{I_3}+x^{I_4}+1$, for $r=1,2$. Hence $f_r(x)$ divides $f'_r(x)$, where $f'_r(x)=x^{I_{1}\textrm{mod}~ e_r}+x^{I_{2}\textrm{mod}~ e_r}+x^{I_{3}\textrm{mod}~ e_r}+x^{I_{4}\textrm{mod}~ e_r}+1$. Suppose $f'_r(x)$ is neither a $3$-nomial nor a $5$-nomial, and also $f'_r(x)$ doesn't belong to $\mathcal{A}_{r}$. This is possible only when $I_{1}\textrm{mod}~ e_r  \equiv I_{2}\textrm{mod}~ e_r \equiv I_{3}\textrm{mod}~ e_r  \equiv 0, I_{4}\textrm{mod}~ e_r \not \equiv 0$ or $I_{1}\textrm{mod}~ e_r \equiv I_{2}\textrm{mod}~ e_r \equiv I_{3}\textrm{mod}~ e_r \equiv I_{4}\textrm{mod}~ e_r$. This is a contradiction to the fact that $f_r(x) $ divides $f'_r(x)$. Hence, there is no other $5$-nomial multiple of $f_1f_2$ than the multiples enumerated in Theorem \ref{basemy}.
\end{proof}
\noindent
In particular, Theorem \ref{basemy} is applicable when $f_1(x)$ and $f_2(x)$ are primitive polynomials over GF(2) with mutually co-prime degrees $d_1$ and $d_2$.

\section{Exact enumeration of $t$-nomial multiples of the product of $k$ primitive polynomials}\label{recursive_approach}
Let $f_1(x),f_2(x), \dots, f_k(x)$ be $k$ polynomials over GF(2) of degrees $d_{1},d_{2}, \dots ,d_{k}$ and exponents $e_{1},e_{2},\dots,e_{k}$ respectively, with the following conditions:
\begin{enumerate}
	\item  $e_{1}, e_{2}, \dots , e_{k}$ are pairwise relatively prime,
	\item $f_{1}(0) = f_{2}(0) = \cdots = f_{k}(0) = 1$,
	\item $\text{gcd} (f_{l}(x), f_{m}(x)) = 1 \ \text{for}\  1\leq l \neq m \leq k$ ,
\end{enumerate}
This section describes a recursive approach to find the exact number of $t$-nomial multiples having degree $< e_{1}e_{2} \cdots e_{k}$ of the product polynomial $f_1f_2\cdots f_k$. We denote $f_1f_2\cdots f_r$ by $f_{12\cdots r}$, and the total number of $t$-nomial multiples of $f_{12\cdots r}$ by $N_{12\cdots r,t}~ (1\leq r \leq k)$. So we can write $f_1f_2\cdots f_k=f_{12\cdots r} f_{r+1}\cdots f_k$. By Theorem \ref{base2}, the exponent of $f_{12\cdots r}$ is equal to $e_{1}e_{2} \cdots e_{r}$. Now we can see that
\begin{enumerate}
	\item  $\text{gcd}(e_{1}e_{2} \cdots e_{r}, e_{r+1})=1,\  1\leq r < k $,
	\item $f_{12\cdots r}(0)= 1,\  1\leq r < k $,
	\item $\text{gcd} (f_{12\cdots r}, f_{r+1}) = 1, \  1\leq r < k$ ,
\end{enumerate}
\noindent
Since $f_{12\cdots r}$ and $f_{r+1}$ satisfy all the conditions of Proposition \ref{prop} and Theorem \ref{basemy}, we can enumerate $t$-nomial multiples of the product of $r+1$ polynomials, $1\leq r < k$, if we have the formula for exact enumeration of $t$-nomial multiples of the product of two polynomials. As we have the formula for $t=4$ and $5$, we can apply this recursive approach to these values of $t$. Let us see how the recursive method exactly works.\par
First, we determine the number of $t$-nomial multiples of $f_{12}$. After that, we find the number of $t$-nomial multiples of $f_{123}$ by considering $f_{123}$ as the product of $f_{12}$ and $f_3$. Repeating this procedure for $ 3\leq r < k$, we can find the total number of $t$-nomial multiples of $f_1f_2\cdots f_k$ in $k-1$ steps.
\begin{remark}
	In particular, let $f_1(x), f_2(x), \dots, f_k(x)$ be $k$ primitive polynomials of pairwise co-prime degrees $d_{1}, d_{2}, \dots , d_{k}$, respectively. We can find the total number of $t$-nomial multiples of the product polynomial $f_1f_2\cdots f_k$ with degree $< (2^{d_{1}} - 1)(2^{d_{2}} - 1) \cdots (2^{d_{k}} - 1)$ using the above recursive method.
\end{remark}
\begin{remark}
	It is clear that the exact count is independent of the choice of primitive polynomials. For verification, we used SageMath\cite{sage1} to count the number of $t$-nomial multiples of the product of two and three primitive polynomials when $t=4,5$. The number turned out to be the same as we obtained using the recursive approach (refer to Table 1 and Table 2).
\end{remark}
\begin{remark}
	In this article, we have obtained the exact number of $t$-nomial multiples of the product of primitive polynomials, when $ t=4,5$. Extending these results for $t\geq 6$ gives a quite cumbersome formula. So we do not discuss these cases here.
\end{remark}

\subsection{Experimental Results}
We have verified our results with the help of SAGEMATH \cite{sage1} software for the product of two and three primitive polynomials over $\mathbb{F}_2$. For the product of two primitive polynomials, we verified our results by taking the values $(d_1, d_2): (2,3), (3,4), (3,5)$ and $(4,5)$. Moreover, we verified our results for the product of three primitive polynomials of degrees $2$, $3$ and $5$. The computations confirm that the number of $4$-nomial and $5$-nomial multiples of the product polynomial matches our results.

\subsection{An illustrative example}
Let $f_{1}(x),f_{2}(x)$ and $f_{3}(x)$ are primitive polynomials over GF(2) of degrees $d_1=2,d_2=3,$ and $d_3=5$, respectively. Hence\\
$e_1=3,e_2=7,e_3=31$;\\
$N_{1,3}=1, N_{2,3}=3, N_{3,3}=15$;\\
$N_{1,5}=0, N_{2,5}=0, N_{3,5}=840$;\\
$n_1=0, n_2= 4, n_3=140, n_{12}=36$.\\
Then the total number of 3-nomial multiples of $f_{1}f_{2}$ with degree $<e_1e_2$ is $N_{12,3}=6$, using Corollary \ref{cor3}.
Next, the total number of 5-nomial multiples of $f_{1}f_{2}$ is $N_{12,5}= 155$, by the formula obtained in Theorem \ref{basemy}.\par
Recursively, we get the total number of 5-nomial multiples of the product polynomial $f_{1}f_{2}f_{3}$ as follows.
\begin{equation*}
	\begin{aligned}
		N_{123,5}= {} & 4!N_{{12},5} N_{3,5} + N_{{12},3}N_{3,5}[12(e_{12}-2)+8] + N_{3,3}N_{{12},5}[12(e_3-2)+8]\\
		&  + 18n_{12}n_3 + 4![n_{12}N_{3,5}+n_3N_{{12},5}] + n_{12}N_{3,3}[12(e_3-3)+14]\\
		& + n_{3}N_{{12},3}[12(e_{12}-3)+14]
		+ N_{{12},3}N_{3,3}[5(e_{12}-3)(e_3-3)\\
		& +7(e_{12}-3)+7(e_3-3) + 5]
	\end{aligned}
\end{equation*}
Hence, we finally get $N_{123,5}= 7117650$.

\subsection{Degree Distribution}
In this section, we will estimate the degree of the least degree $t$-nomial multiples of the product of primitive polynomials.
To make a cryptographic system robust against correlation attack, primitive polynomials should be chosen in such a way that their product polynomial does not have sparse multiples of lower degrees.\par
Let $f_1(x), f_2(x), \dots, f_k(x)$ are $k$ primitive polynomials of degree $d_1,d_2, \dots, d_k$ and exponents $e_1,e_2, \dots, e_k$. Denote the product polynomial $f_1(x)f_2(x)\cdots f_k(x)$ by $f_{12\cdots k}(x)$. Let $d=\sum_{i=1}^{k}d_i$, and $e=(2^{d_1}-1)(2^{d_2}-1)\cdots(2^{d_k}-1)$. Also, $\mathcal{N}$ denotes the total number of $t$-nomial multiples of the product polynomial.
A simple algorithm to find the least degree $t$-nomial multiple of the product polynomial can be written as follows:
\par For $j=d$ to $e-1$.
\par 1. Consider all the $t$-nomials $g_j(x)$ of degree $j$.
\par 2. If $f_{12\cdots k}(x)$ divides $g_j(x)$ then return this $t$-nomial and terminate.\\
We associate a $(t-1)$-tuple $(i_{t-1},i_{t-2},\dots,i_1)$ to the $t$-nomial $x^{i_{t-1}}+x^{i_{t-2}}+ \cdots + x^{i_1} + 1$. The total number of $(t-1)$-tuples, where components take values from $1$ to $e-1$, is $\tbinom{e-1}{t-1}$. Let $\mathcal{N}$ denotes the number of $t$-nomial multiples of the product polynomial $f_{12\cdots k}(x)$.
Then the probability that a $(t-1)$-tuple represents a genuine $t$-nomial multiple of the product polynomial is $\mathcal{N}/\tbinom{e-1}{t-1}$. Thus, the number of $t$-nomial multiples of degree less than or equal to $c$ is $\tbinom{c}{t-1}\mathcal{N}/\tbinom{e-1}{t-1}$ approximately.\par
To estimate the deree of least degree $t$-nomial multiple, we have to find the approximate value of $c$ such that $\tbinom{c}{t-1}\mathcal{N}/\tbinom{e-1}{t-1} \approx 1$. When we estimate using the lower bound obtained in Theorem \ref{base}, as discussed in \cite[pp. 341] {Results-on-multiples-of-primitive-polynomials-and-their-products}, $c \approx 2^{d/{t-1}}$. But, in case of $4$-nomial and $5$-nomial multiples, we have the exact value of $\mathcal{N}$, which provides a better approximation for $c$. For example, in case of $4$-nomial multiples of the product of $f_1(x)=x^5 + x^2 + 1$ and $f_2(x) = x^3 + x + 1$, we get $c \approx 2^{8/3}\approx 6.3496$ using $c \approx 2^{d/{t-1}}$. But the least degree $4$-nomial multiple has degree $13$, which is the same as we get using the exact count. Similarly, in case of $5$-nomial multiples of the product of $f_1(x)=x^5 + x^4 + x^3 +x^2 + 1$ and $f_2(x) = x^7 + x + 1$, $c \approx 2^{3}$. But the least degree $5$-nomial multiple has degree $22$, which is closer to $c \approx 20$ obtained by the exact count. 
\section{A Conjecture:}
Maitra, Gupta, and Venkateswarlu \cite{Results-on-multiples-of-primitive-polynomials-and-their-products} provided Conjecture \ref{conjecture} on the least degree $t$-nomial multiples of the product of primitive polynomials. The conjecture is not true in general. In support of that, we are providing two counter-examples. First, we state the conjecture here.
\begin{conjecture}\label{conjecture}\emph{\cite{Results-on-multiples-of-primitive-polynomials-and-their-products}}
Let $x^{I_{1}}+x^{I_{2}}+\cdots+x^{I_{t-1}}+1$ be the least degree $t$-nomial multiple of the product polynomial $f_{1}f_{2}\cdots f_{k}$, which itself is a $\tau-$nomial $(4 \leq t < \tau)$. Each polynomial $f_r(x)$ is a primitive polynomial of degree $d_r$, and exponent $e_r, r = 1, \dots, k$, and degrees are pairwise coprime. Moreover, $N_{r,t} > 0$, for $ r = 1, \dots , k$. Then $I_v\not\equiv I_w\ \text{mod}\  e_r$, for any $1\leq v \neq w\leq t-1$, and for any $r = 1, \dots , k$. That is, the least degree t-nomial multiple of the product polynomial $f_{1}f_{2}\cdots f_{k}$ is the one that is generated as described in the proof of Theorem \ref{base}.
\end{conjecture}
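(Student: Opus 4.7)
My plan is to refute Conjecture \ref{conjecture} by exhibiting explicit primitive polynomials whose least-degree $t$-nomial multiple violates the stated non-coincidence property. The structure of the proof of Theorem \ref{basemy} already points to where such examples live: that proof partitions the $t$-nomial multiples of $f_1 f_2$ into six cases, and only Case 1 corresponds to the CRT construction described in the proof of Theorem \ref{base} (genuine $t$-nomials of each $f_r$ combined coordinate-wise). In Cases 2--6 the polynomials $p_r(x)$ are $3$-nomials padded with cancelling repeated monomials, or elements of $\mathcal{A}_r$, and these produce $t$-nomial multiples of $f_1 f_2$ for which some pair $I_v, I_w$ necessarily coincides modulo $e_r$. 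Thus the conjecture asserts, equivalently, that the minimum degree arising in Case 1 is always strictly smaller than the minima arising in Cases 2--6, and a single counter-example in either direction suffices.

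The plan is therefore to hunt for parameter regimes where Case 1 is scarce or absent at low degree, forcing the minimum multiple to come from a padded construction. The most promising candidates are pairs of primitive polynomials with small degrees $(d_1,d_2)$ where $N_{1,t}$ or $N_{2,t}$ is small (for $5$-nomials this counter is even zero when $d_r \in \{2,3\}$, as noted in the illustrative example in Section \ref{recursive_approach}). For $t=4$ I would scan pairs $(d_1,d_2) \in \{(3,4),(3,5),(4,5),(4,7),(5,7)\}$, and for $t=5$ analogous small coprime pairs, together with (for good measure) triples involving $d_3 = 2$ or $3$ that zero out some $N_{r,5}$.

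For each candidate product $f_1 f_2 \cdots f_k$, I would use SageMath (as the authors do elsewhere) to enumerate all $t$-nomials $x^{I_{t-1}} + \cdots + x^{I_1} + 1$ of degree strictly less than the exponent of the product, testing divisibility by $f_1 f_2 \cdots f_k$. Among the divisors of smallest degree, I would reduce each $I_v$ modulo every $e_r$ and search for a coincidence $I_v \equiv I_w \pmod{e_r}$. The moment one such least-degree multiple is found, the conjecture is disproved.

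The main obstacle is purely computational: the space of $t$-nomials up to degree $c$ has size $\binom{c}{t-1}$, and the least-degree multiple of the product may sit well above the crude Case-1 estimate $c \approx 2^{d/(t-1)}$ from the degree-distribution discussion. One must therefore brute-force well past that estimate, especially when $N_{r,t}$ is small so that Case 1 is impoverished. A secondary subtlety is that having $I_v \equiv I_w \pmod{e_r}$ for the least-degree multiple does not by itself guarantee it is the global minimum, so one must certify minimality by exhaustively ruling out all lower-degree $t$-nomials; fortunately the search bound is explicit, and two concrete tables --- one for a $t=4$ violation and one for $t=5$ --- should settle the matter.
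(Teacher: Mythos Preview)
Your overall plan---refute the conjecture by an explicit, computer-found counter-example, then certify minimality by exhaustive search below that degree---is precisely what the paper does, so the approach is sound. But your search heuristic contains a real gap: the conjecture carries the hypothesis $N_{r,t} > 0$ for every $r$. Your stated strategy of targeting pairs or triples that ``zero out some $N_{r,5}$'' (e.g.\ including a factor of degree $2$ or $3$ when $t=5$) therefore produces instances that lie \emph{outside} the hypotheses, and any coincidence $I_v \equiv I_w \pmod{e_r}$ found there does not disprove the statement. The regime you flag as ``most promising'' is exactly the inadmissible one.

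Relatedly, the paper's actual counter-examples are not in your proposed scan ranges. For $t=5$ it takes $f_1 = x^4+x+1$, $f_2 = x^9+x^6+x^4+x^3+1$ (degrees $4$ and $9$, both with $N_{r,5}>0$); the least-degree $5$-nomial multiple is $x^{19}+x^{17}+x^{8}+x^{4}+1$, with $19 \equiv 4 \pmod{15}$. For $t=4$ it takes the degree-$(4,5,9)$ triple $x^4+x+1$, $x^5+x^4+x^3+x^2+1$, $x^9+x^8+x^6+x^5+1$; the least-degree $4$-nomial multiple is $x^{135}+x^{92}+x^{47}+1$, with $92 \equiv 47 \equiv 2 \pmod{15}$. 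Your plan would succeed once you (i) restrict to parameters with all $N_{r,t}>0$ and (ii) widen the degree search beyond the pairs you listed; as written, it aims at the wrong targets.
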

\noindent
\textit{Counter Examples.}
Let $f_1(x)= x^4+x+1$ and $f_2(x)= x^9+x^6+x^4+x^3+1$. The product $f_{1}f_{2}=x^{13}+x^{9}+x^{8}+x^{6}+x^5+x^4+x^3+x+1$ is a $9$-nomial, and the least degree $5$-nomial multiple of $f_{1}f_{2}$ is $x^{19}+x^{17}+x^{8}+x^{4}+1$, for which $I_1 \equiv I_4\equiv 4$ mod 15.\par
Again, let $f_1(x)= x^4+x+1, f_2(x)= x^5+x^4+x^3+x^2+1$, and $f_3(x)= x^9+x^8+x^6+x^5+1$. The product $f_{1}f_{2}f_{3} = x^{18}+x^{13}+x^{11}+x^{10}+x^8+x^7+x^5+x^4+x^2+x+1$ is an $11$-nomial. Further, the least degree $4$-nomial multiple of $f_{1}f_{2}f_{3}$ is $x^{135}+x^{92}+x^{47}+1$, for which $I_2 \equiv I_3\equiv 2$ mod $15$. These two examples contradict the Conjecture \ref{conjecture}.

\section*{Acknowledgment} This work has been supported by CSIR, New Delhi, Govt. of India, under Grant F. No. 09/086(1328)/2018-EMR-1.

\end{document}